\numberwithin{equation}{section}
\definecolor{webgreen}{rgb}{0,.5,0}
\definecolor{webbrown}{rgb}{.6,0,0}
\newcommand{\Z}{{\mathbb Z}}
\newtheorem{thm}{Theorem}
\newtheorem{theorem}[thm]{Theorem}
\newtheorem{lemma}{Lemma}
\newtheorem{corollary}[thm]{Corollary}
\title{Generalizations of Fibonacci-Lucas inverse tangent summation identities of Hoggatt and Ruggels}
\author[]{Kunle Adegoke \\\href{mailto:kunle.adegoke@yandex.com}{\tt kunle.adegoke@yandex.com}}
\affil{Department of Physics and Engineering Physics, \mbox{Obafemi Awolowo University}, 220005 Ile-Ife, Nigeria}
\begin{document}
\date{}

\maketitle

\begin{abstract}\noindent
We derive generalizations of a couple of inverse tangent summation identities involving Fibonacci and Lucas numbers. As byproducts we establish many new inverse tangent identities involving the Fibonacci and Lucas numbers.
\end{abstract}
\section{Introduction}
The Fibonacci numbers, $F_n$, and the Lucas numbers, $L_n$, $n\in\Z$, are defined by:
\begin{equation}
F_0  = 0,\;F_1  = 1,\;F_n  = F_{n - 1}  + F_{n - 2}\; (n\ge 2),\quad F_{ - n}  = ( - 1)^{n-1} F_n
\end{equation}
and
\begin{equation}
L_0  = 2,\;L_1  = 1,\;L_n  = L_{n - 1}  + L_{n - 2}\; (n\ge 2) ,\quad L_{ - n}  = ( - 1)^n L_n\,.
\end{equation}
Many inverse tangent summation identities involving the Fibonacci and Lucas numbers are known in the Literature (see Adegoke \cite{adegoke15}, Frontczak \cite{frontczak15}, Hoggatt and Ruggles \cite{ruggles63, ruggles64}, Melham \cite{melham99, melham95}).

\medskip

Using mathematical induction, Hoggatt and Ruggles \cite[Theorem 5]{ruggles63} proved
\begin{equation}\label{eq.qgvdh1o}
\sum\limits_{n = 1}^t {( - 1)^{n + 1} \tan ^{ - 1} \frac{1}{{F_{2n} }}}  = \tan ^{ - 1} \frac{{F_t }}{{F_{t + 1} }}\,,
\end{equation}
for non-negative integers $t$. 

\medskip

In the limit as $t\to\infty$, identity \eqref{eq.qgvdh1o} gives
\begin{equation}\label{eq.axhbhhg}
\sum\limits_{n = 1}^\infty {( - 1)^{n + 1} \tan ^{ - 1} \frac{1}{{F_{2n} }}}  = \tan ^{ - 1} \frac{1}{\varphi}\,,
\end{equation}
where $\varphi=(1+\sqrt 5)/2$ is the golden ratio.

\medskip

Hoggatt and Ruggles \cite{ruggles64} also derived
\begin{equation}\label{eq.ewqu221}
2\sum\limits_{n = 1}^t {\tan ^{ - 1} \frac{1}{{L_{2n} }}}  = \sum\limits_{n = 1}^t {\tan ^{ - 1} \frac{1}{{F_{2n + 1} }}}  - \tan ^{ - 1} \frac{1}{{L_{2t + 2} }} + \tan ^{ - 1} \frac{1}{3}\,,
\end{equation}
which gives
\begin{equation}
2\sum\limits_{n = 1}^\infty {\tan ^{ - 1} \frac{1}{{L_{2n} }}}  = \tan ^{ - 1} 2\,,
\end{equation}
since (Lehmer's formula, Hoggatt and Ruggles \cite[Theorem 5]{ruggles64})
\begin{equation}
\sum\limits_{n = 1}^\infty  {\tan ^{ - 1} \frac{1}{{F_{2n + 1} }}}  = \frac{\pi }{4}
\end{equation}
and
\begin{equation}
\frac{\pi }{4} + \tan ^{ - 1} \frac{1}{3} = \tan ^{ - 1} 2\,.
\end{equation}
In this paper we offer, for $m$ and $t$ non-negative integers, the following generalizations of \eqref{eq.qgvdh1o} and \eqref{eq.ewqu221}:
\[
\begin{split}
2\sum\limits_{n = 1}^t {( - 1)^{n - 1} \tan ^{ - 1} \frac{{F_m }}{{F_{2n + m - 1} }}}  &= \sum\limits_{n = 1}^m {( - 1)^{n - 1} \tan ^{ - 1} \frac{2}{{L_{2n - 1} }}}\\
&\quad  + ( - 1)^{t - 1} \sum\limits_{n = 1}^m {( - 1)^{n - 1} \tan ^{ - 1} \frac{{L_m }}{{L_{2n + 2t + m - 1} }}}\\
&\qquad- \sum\limits_{n = t + 1}^{t + m} {( - 1)^{n - 1} \tan ^{ - 1} \frac{{F_m }}{{F_{2n + m - 1} }}},\quad\text{$m=0,1,2,\ldots$}\,,
\end{split}
\]

\[
\begin{split}
2\sum\limits_{n = 1}^t {\tan ^{ - 1} \frac{{L_m }}{{L_{2n + m - 1} }}}  &= \sum\limits_{n = 1}^m {\tan ^{ - 1} \frac{2}{{L_{2n - 1} }}}  - \sum\limits_{n = 1}^m {\tan ^{ - 1} \frac{{F_m }}{{F_{2n + 2t + m - 1} }}}\\
&\qquad- \sum\limits_{n = t + 1}^{t + m} {\tan ^{ - 1} \frac{{L_m }}{{L_{2n + m - 1} }}},\quad\text{$m=1,3,5,\ldots$}\,,
\end{split}
\]
with the limiting values:
\[
2\sum\limits_{n = 1}^\infty {( - 1)^{n - 1} \tan ^{ - 1} \frac{{F_m }}{{F_{2n + m - 1} }}}  = \sum\limits_{n = 1}^m {( - 1)^{n - 1} \tan ^{ - 1} \frac{2}{{L_{2n - 1} }}},\quad\text{$m=0,1,2,\ldots$}\,,
\]

\[
2\sum\limits_{n = 1}^\infty {\tan ^{ - 1} \frac{{L_m }}{{L_{2n + m - 1} }}}  = \sum\limits_{n = 1}^m {\tan ^{ - 1} \frac{2}{{L_{2n - 1} }}}, \quad\text{$m=1,3,5,\ldots,$}\,.
\]
If $m$ is even, we show that
\[
\begin{split}
2\sum\limits_{n = 1}^t {\tan ^{ - 1} \frac{{F_m }}{{F_{2n + m - 1} }}}  &= \sum\limits_{n = 1}^m {\tan ^{ - 1} \frac{2}{{L_{2n - 1} }}}  - \sum\limits_{n = 1}^m {\tan ^{ - 1} \frac{{L_m }}{{L_{2n + 2t + m - 1} }}}\\
&\qquad- \sum\limits_{n = t + 1}^{t + m} {\tan ^{ - 1} \frac{{F_m }}{{F_{2n + m - 1} }}}\,,
\end{split}
\]
with the limiting value:
\[
2\sum\limits_{n = 1}^\infty {\tan ^{ - 1} \frac{{F_m }}{{F_{2n + m - 1} }}}  = \sum\limits_{n = 1}^m { \tan ^{ - 1} \frac{2}{{L_{2n - 1} }}},\text{ $m=0,2,4,\ldots$}\,.
\]
We require the following identities which should be familiar or are easily derivable from the multiplication and addition formulas of Fibonacci and Lucas numbers:
\begin{equation}\label{eq.jms631k}
F_{2m}=F_mL_m\,,
\end{equation}
\begin{equation}\label{eq.vdi7cbj}
F_nL_m+L_nF_m=2F_{m+n}\,,
\end{equation}
\begin{equation}\label{eq.t28c16c}
F_{n + 2m}  - F_n  = L_m F_{n + m},\quad\text{$m$ odd}\,, 
\end{equation}
\begin{equation}
F_{n + 2m}  - F_n  = F_m L_{n + m},\quad\text{$m$ even}\,, 
\end{equation}
\begin{equation}
F_{n + 2m}  + F_n  = F_m L_{n + m},\quad\text{$m$ odd}\,, 
\end{equation}
\begin{equation}\label{eq.gft31k}
F_{n + 2m}  + F_n  = L_m F_{n + m},\quad\text{$m$ even}\,, 
\end{equation}
\begin{equation}
L_{n + 2m}  - L_n  = L_m L_{n + m},\quad\text{$m$ odd}\,, 
\end{equation}
\begin{equation}
L_{n + 2m}  - L_n  = 5F_m F_{n + m},\quad\text{$m$ even}\,, 
\end{equation}
\begin{equation}
L_{n + 2m}  + L_n  = 5F_m F_{n + m},\quad\text{$m$ odd}\,, 
\end{equation}
\begin{equation}
L_{n + 2m}  + L_n  = L_m L_{n + m},\quad\text{$m$ even}\,, 
\end{equation}
\begin{equation}\label{eq.qoo6doj}
F_n F_{n + 2m}  = F_{n + m}^2  + F_m^2,\quad\text{$n$ odd}\,, 
\end{equation}
\begin{equation}
F_n F_{n + 2m}  = F_{n + m}^2  - F_m^2,\quad\text{$n$ even}\,, 
\end{equation}
\begin{equation}
L_n L_{n + 2m}  = 5F_{n + m}^2  - L_m^2,\quad\text{$n$ odd}\,,
\end{equation}
\begin{equation}\label{eq.c5jimqp}
L_n L_{n + 2m}  = 5F_{n + m}^2  + L_m^2,\quad\text{$n$ even}\,. 
\end{equation}
We also require the following inverse tangent identities:
\begin{equation}\label{eq.pj13if2}
\tan ^{ - 1} \frac{{x + y}}{{1 - xy}} = \tan ^{ - 1} x + \tan ^{ - 1} y\,,\quad\text{if $xy<1$}\,,
\end{equation}
\begin{equation}\label{eq.x7ooe1a}
\tan ^{ - 1} \frac{{x - y}}{{1 + xy}} = \tan ^{ - 1} x - \tan ^{ - 1} y\,,\quad\text{if $xy>-1$}\,.
\end{equation}
\section{Inverse tangent Fibonacci-Lucas identities}
As a consequence of identities \eqref{eq.jms631k} to \eqref{eq.c5jimqp} and the inverse tangent identities \eqref{eq.pj13if2} and \eqref{eq.x7ooe1a}, we have:
\begin{lemma}\label{lemma.id}
The following identities hold for non-negative integers $m$ and $n$:
\begin{equation}\label{eq.gdtp5sa}
\tan ^{ - 1} \frac{{F_{2m} }}{{F_{2n + 2m - 1} }} = \tan ^{ - 1} \frac{{L_m }}{{L_{2n + m - 1} }} - \tan ^{ - 1} \frac{{L_m }}{{L_{2n + 3m - 1} }},\quad\text{$m$ even}\,,
\end{equation}

\begin{equation}\label{eq.ryx4qng}
\tan ^{ - 1} \frac{{F_{2m} }}{{F_{2n + 2m - 1} }} = \tan ^{ - 1} \frac{{L_m }}{{L_{2n + m - 1} }} + \tan ^{ - 1} \frac{{L_m }}{{L_{2n + 3m - 1} }},\quad\text{$m$ odd}\,,
\end{equation}

\begin{equation}\label{eq.jf9ybp9}
\tan ^{ - 1} \frac{{F_{2m} }}{{F_{2n + 2m - 1} }} = \tan ^{ - 1} \frac{{F_m }}{{F_{2n + m - 1} }} - \tan ^{ - 1} \frac{{F_m }}{{F_{2n + 3m - 1} }},\quad\text{$m$ odd}\,,
\end{equation}

\begin{equation}\label{eq.imhso0b}
\tan ^{ - 1} \frac{{F_{2m} }}{{F_{2n + 2m - 1} }} = \tan ^{ - 1} \frac{{F_m }}{{F_{2n + m - 1} }} + \tan ^{ - 1} \frac{{F_m }}{{F_{2n + 3m - 1} }}\,,\quad\text{$m$ even}\,,
\end{equation}

\begin{equation}\label{eq.ec853w8}
\tan ^{ - 1} \frac{{L_m^2 L_{2n + 2m} }}{{5F_{2n + 2m}^2 }} = \tan ^{ - 1} \frac{{L_m }}{{L_{2n + m} }} - \tan ^{ - 1} \frac{{L_m }}{{L_{2n + 3m} }}\,,\quad\text{$m$ odd}\,,
\end{equation}

\begin{equation}\label{eq.ec853w8}
\tan ^{ - 1} \frac{{L_m^2 L_{2n + 2m} }}{{5F_{2n + 2m}^2 }} = \tan ^{ - 1} \frac{{L_m }}{{L_{2n + m} }} + \tan ^{ - 1} \frac{{L_m }}{{L_{2n + 3m} }}\,,\quad\text{$m$ even}\,,
\end{equation}

\begin{equation}\label{eq.f94zdr0}
\tan ^{ - 1} \frac{{F_m^2 L_{2n + 2m} }}{{F_{2n + 2m}^2 }} = \tan ^{ - 1} \frac{{F_m }}{{F_{2n + m} }} - \tan ^{ - 1} \frac{{F_m }}{{F_{2n + 3m} }}\,,\quad\text{$m$ even}\,,
\end{equation}

\begin{equation}\label{eq.i97s6g6}
\tan ^{ - 1} \frac{{F_m^2 L_{2n + 2m} }}{{F_{2n + 2m}^2 }} = \tan ^{ - 1} \frac{{F_m }}{{F_{2n + m} }} + \tan ^{ - 1} \frac{{F_m }}{{F_{2n + 3m} }}\,,\quad\text{$m$ odd}\,.
\end{equation}
\end{lemma}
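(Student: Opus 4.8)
The plan is to establish all eight identities by one uniform computation: collapse the right-hand side to a single inverse tangent by the subtraction law \eqref{eq.x7ooe1a} or the addition law \eqref{eq.pj13if2}, and then check that the resulting argument simplifies to the left-hand side using the Fibonacci--Lucas relations \eqref{eq.jms631k}--\eqref{eq.c5jimqp}.

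I would first treat \eqref{eq.gdtp5sa}--\eqref{eq.imhso0b}. Each right-hand side has the shape $\tan^{-1}(a/b)\pm\tan^{-1}(a/c)$ with $a\in\{L_m,F_m\}$ and $\{b,c\}=\{G_{2n+m-1},\,G_{2n+3m-1}\}$, where $G=L$ or $G=F$ matches $a$. Combining the two terms by \eqref{eq.x7ooe1a} or \eqref{eq.pj13if2} gives
\[
\tan^{-1}\frac{a\,(c\pm b)}{bc\mp a^2}\,,
\]
with the upper signs corresponding to the addition law. For the numerator I would apply the appropriate ``$\pm$ of two $L$'s (resp.\ $F$'s)'' relation with lower index $2n+m-1$: this turns $c\pm b$ into $5F_m F_{2n+2m-1}$ in the Lucas cases and into $L_m F_{2n+2m-1}$ in the Fibonacci cases, the right choice among \eqref{eq.t28c16c}--\eqref{eq.gft31k} and the four $L_{n+2m}\pm L_n$ relations being forced by the parity of $m$ and the outer sign of the statement. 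For the denominator I would invoke the matching ``product of two $L$'s (resp.\ $F$'s)'' relation among \eqref{eq.qoo6doj}--\eqref{eq.c5jimqp}, noting that the lower index $2n+m-1$ has parity opposite to that of $m$; in each of the four sign/parity combinations the term $\mp a^2$ cancels exactly, leaving $5F_{2n+2m-1}^2$ (Lucas) or $F_{2n+2m-1}^2$ (Fibonacci). The quotient then reduces to $L_m F_m/F_{2n+2m-1}$, which equals $F_{2m}/F_{2n+2m-1}$ by \eqref{eq.jms631k}. The remaining four identities --- the two displayed as \eqref{eq.ec853w8}, together with \eqref{eq.f94zdr0} and \eqref{eq.i97s6g6} --- are the same computation with the lower index shifted from $2n+m-1$ to $2n+m$; this flips the parity that matters, so the companion member of each pair of relations is used, the cancellation in the denominator now gives $5F_{2n+2m}^2$ or $F_{2n+2m}^2$ outright, the numerator is $a$ times a single $L$- or $F$-term, and \eqref{eq.jms631k} is not needed.

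Finally I would verify the side conditions of \eqref{eq.pj13if2} and \eqref{eq.x7ooe1a}. For the subtraction law nothing is needed: every argument is nonnegative, so the product of the two arguments is $\ge 0 > -1$ and the law holds exactly (a difference of two arctangents of nonnegative reals always lies in $(-\pi/2,\pi/2)$). For the addition law one needs the product of the two arguments to be strictly below $1$; but after the simplification above that product is exactly $a^2/(a^2+s)$ with $s$ a positive perfect square, hence is $<1$ --- equivalently, although at $n=0$ one of the two individual arguments can exceed $1$, the other stays well below $1$ and the product does not reach $1$. The degenerate instances ($m=0$, or $n=m=0$), where a Fibonacci number in a denominator vanishes, are read in the obvious limiting sense and remain valid. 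The only real hazard here is parity bookkeeping: each ``$\pm$ of two terms'' relation and each ``product of two terms'' relation exists in an $m$-odd and an $m$-even version, and the lower index contributes its own parity, so one must pair them correctly for the $a^2$-terms to cancel and the numerator to collapse; beyond that there is no analytic difficulty.
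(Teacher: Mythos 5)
Your proposal is correct and is precisely the argument the paper intends: the paper states the lemma as an immediate consequence of identities \eqref{eq.jms631k}--\eqref{eq.c5jimqp} together with the arctangent laws \eqref{eq.pj13if2} and \eqref{eq.x7ooe1a}, without writing out the details, and your computation supplies exactly those details with the correct parity pairings and the needed checks of the side conditions. No discrepancy with the paper's approach.
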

We also have
\begin{equation}\label{eq.gxld0li}
\tan ^{ - 1} \frac{2}{{L_{2n - 1} }} = \tan ^{ - 1} \frac{{L_m }}{{L_{2n + m - 1} }} + \tan ^{ - 1} \frac{{F_m }}{{F_{2n + m - 1} }}\,.
\end{equation}
In order to obtain the summation identities associated with the identities in Lemma \ref{lemma.id}, we require the following telescoping summation identities which hold for any sequence~$(X_i)$ of real numbers (see Adegoke \cite{adegoke}):
\begin{equation}\label{eq.s1qn9qb}
\sum\limits_{n = 1}^k {\left( {X_n  - X_{n + m} } \right)}  = \sum\limits_{n = 1}^m {\left( {X_n  - X_{n + k} } \right)}\,,
\end{equation}
\begin{equation}\label{eq.n5lm399}
\sum\limits_{n = 1}^k {( - 1)^{n - 1} \left( {X_n  - X_{n + m} } \right)}  = \sum\limits_{n = 1}^m {( - 1)^{n - 1} X_n }  + ( - 1)^{k - 1} \sum\limits_{n = 1}^m {( - 1)^{n - 1} X_{n + k} },\quad\text{if $m$ is even}
\end{equation}
and
\begin{equation}\label{eq.e3jcedw}
\sum\limits_{n = 1}^k {( - 1)^{n - 1} \left( {X_n  + X_{n + m} } \right)}  = \sum\limits_{n = 1}^m {( - 1)^{n - 1} X_n }  + ( - 1)^{k - 1} \sum\limits_{n = 1}^m {( - 1)^{n - 1} X_{n + k} },\quad\text{if $m$ is odd}\,.
\end{equation}
Infinite summations are evaluated using
\begin{equation}
\sum\limits_{n = 1}^\infty {\left( {X_n  - X_{n + m} } \right)}  = \sum\limits_{n = 1}^m {X_n }  - \sum\limits_{n = 1}^m {\mathop {\lim }\limits_{k \to \infty } X_{n + k} }\,.
\end{equation}
In particular, if $X_k$ approaches zero as $k$ approaches infinity, then we have
\begin{equation}
\sum\limits_{n = 1}^\infty {\left( {X_n  - X_{n + m} } \right)}  = \sum\limits_{n = 1}^m {X_n}\,,
\end{equation}
\begin{equation}
\sum\limits_{n = 1}^\infty {( - 1)^{n - 1} \left( {X_n  - X_{n + m} } \right)}  = \sum\limits_{n = 1}^m {( - 1)^{n - 1} X_n },\quad\text{if $m$ is even}
\end{equation}
and
\begin{equation}
\sum\limits_{n = 1}^\infty {( - 1)^{n - 1} \left( {X_n  + X_{n + m} } \right)}  = \sum\limits_{n = 1}^m {( - 1)^{n - 1} X_n },\quad\text{if $m$ is odd}\,.
\end{equation}
Lemma \ref{lemma.id}, identities \eqref{eq.gdtp5sa}, \eqref{eq.jf9ybp9}, \eqref{eq.ec853w8}, \eqref{eq.f94zdr0} and the telescoping summation identity \eqref{eq.s1qn9qb} lead to:
\begin{theorem}
The following identities hold for non-negative integers $t$ and $m$:
\begin{equation}\label{eq.u408m58}
\sum\limits_{n = 1}^t {\tan ^{ - 1} \frac{{F_{2m} }}{{F_{2n + 2m - 1} }}}  = \sum\limits_{n = 1}^m {\tan ^{ - 1} \frac{{L_m }}{{L_{2n + m - 1} }}}  - \sum\limits_{n = 1}^m {\tan ^{ - 1} \frac{{L_m }}{{L_{2n + 2t + m - 1} }}},\quad\text{$m$ even}\,,
\end{equation}

\begin{equation}\label{eq.i2yrfn1}
\sum\limits_{n = 1}^t {\tan ^{ - 1} \frac{{F_{2m} }}{{F_{2n + 2m - 1} }}} = \sum\limits_{n = 1}^m {\tan ^{ - 1} \frac{{F_m }}{{F_{2n + m - 1} }}} - \sum\limits_{n = 1}^m {\tan ^{ - 1} \frac{{F_m }}{{F_{2n + 2t + m - 1} }}},\quad\text{$m$ odd}\,,
\end{equation}

\begin{equation}
\sum\limits_{n = 1}^t {\tan ^{ - 1} \frac{{L_m^2 L_{2n + 2m} }}{{5F_{2n + 2m}^2 }}} = \sum\limits_{n = 1}^m {\tan ^{ - 1} \frac{{L_m }}{{L_{2n + m} }}} - \sum\limits_{n = 1}^m {\tan ^{ - 1} \frac{{L_m }}{{L_{2n + 2t + m} }}}\,,\quad\text{$m$ odd}\,,
\end{equation}

\begin{equation}
\sum\limits_{n = 1}^t {\tan ^{ - 1} \frac{{F_m^2 L_{2n + 2m} }}{{F_{2n + 2m}^2 }}} = \sum\limits_{n = 1}^m {\tan ^{ - 1} \frac{{F_m }}{{F_{2n + m} }}} - \sum\limits_{n = 1}^m {\tan ^{ - 1} \frac{{F_m }}{{F_{2n + 2t + m} }}}\,,\quad\text{$m$ even}\,.
\end{equation}
\end{theorem}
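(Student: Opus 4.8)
The plan is to read each of the four identities off the corresponding line of Lemma~\ref{lemma.id} and then collapse the sum with the telescoping identity \eqref{eq.s1qn9qb}. The key observation is that each of \eqref{eq.gdtp5sa}, \eqref{eq.jf9ybp9}, \eqref{eq.ec853w8}, \eqref{eq.f94zdr0} is already in the shape ``summand $=X_n-X_{n+m}$'' once the sequence $(X_n)$ is chosen correctly, because the subscript ``$2n+3m-1$'' (resp.\ ``$2n+3m$'') is exactly what ``$2n+m-1$'' (resp.\ ``$2n+m$'') becomes under the shift $n\mapsto n+m$.

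Concretely, for \eqref{eq.u408m58} I would set $X_n=\tan^{-1}(L_m/L_{2n+m-1})$ with $m$ even, so that $X_{n+m}=\tan^{-1}(L_m/L_{2(n+m)+m-1})=\tan^{-1}(L_m/L_{2n+3m-1})$, and identity \eqref{eq.gdtp5sa} becomes $\tan^{-1}(F_{2m}/F_{2n+2m-1})=X_n-X_{n+m}$. Summing over $n=1,\dots,t$ and applying \eqref{eq.s1qn9qb} with $k=t$ gives $\sum_{n=1}^t\tan^{-1}(F_{2m}/F_{2n+2m-1})=\sum_{n=1}^m(X_n-X_{n+t})$; writing out $X_{n+t}=\tan^{-1}(L_m/L_{2n+2t+m-1})$ yields the asserted formula.

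The remaining three identities follow by the identical mechanism with the obvious substitutions: for \eqref{eq.i2yrfn1} take $X_n=\tan^{-1}(F_m/F_{2n+m-1})$ with $m$ odd and use \eqref{eq.jf9ybp9}; for the third identity take $X_n=\tan^{-1}(L_m/L_{2n+m})$ with $m$ odd and use \eqref{eq.ec853w8}; for the fourth take $X_n=\tan^{-1}(F_m/F_{2n+m})$ with $m$ even and use \eqref{eq.f94zdr0}. In every case the chosen line of Lemma~\ref{lemma.id} is precisely ``summand $=X_n-X_{n+m}$'', and \eqref{eq.s1qn9qb} with $k=t$ does the rest.

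There is no real obstacle here beyond index bookkeeping: \eqref{eq.s1qn9qb} holds for an arbitrary real sequence $(X_n)$, so no convergence or branch-of-arctangent issues arise at this stage --- all the delicate range conditions attached to \eqref{eq.pj13if2} and \eqref{eq.x7ooe1a} were already discharged in proving Lemma~\ref{lemma.id}. The one point deserving a line of care is verifying the shift $2(n+m)+m-1=2n+3m-1$ (and the even-argument Lucas analogue $2(n+m)+m=2n+3m$), which is what makes the $m$-step difference in the telescoping identity match the ``$3m$'' subscript in the Lemma.
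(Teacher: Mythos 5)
Your proposal is correct and is exactly the argument the paper intends: the paper derives this theorem by combining identities \eqref{eq.gdtp5sa}, \eqref{eq.jf9ybp9}, \eqref{eq.ec853w8} and \eqref{eq.f94zdr0} of Lemma~\ref{lemma.id}, each read as $X_n - X_{n+m}$ for the appropriate choice of $X_n$, with the telescoping identity \eqref{eq.s1qn9qb} taking $k=t$. Your index verification $2(n+m)+m-1 = 2n+3m-1$ and the resulting replacement $X_{n+t}=\tan^{-1}(L_m/L_{2n+2t+m-1})$ are exactly the bookkeeping required, so there is nothing to add.
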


In the limit as $t\to\infty$, we have:
\begin{corollary}
The following identities hold for positive integers $m$:
\begin{equation}\label{eq.g5vsa8d}
\sum\limits_{n = 1}^\infty {\tan ^{ - 1} \frac{{F_{2m} }}{{F_{2n + 2m - 1} }}}  = \sum\limits_{n = 1}^m {\tan ^{ - 1} \frac{{L_m }}{{L_{2n + m - 1} }}},\quad\text{$m$ even}\,,
\end{equation}

\begin{equation}\label{eq.p48nihu}
\sum\limits_{n = 1}^\infty {\tan ^{ - 1} \frac{{F_{2m} }}{{F_{2n + 2m - 1} }}} = \sum\limits_{n = 1}^m {\tan ^{ - 1} \frac{{F_m }}{{F_{2n + m - 1} }}},\quad\text{$m$ odd}\,,
\end{equation}

\begin{equation}
\sum\limits_{n = 1}^\infty {\tan ^{ - 1} \frac{{L_m^2 L_{2n + 2m} }}{{5F_{2n + 2m}^2 }}} = \sum\limits_{n = 1}^m {\tan ^{ - 1} \frac{{L_m }}{{L_{2n + m} }}}\,,\quad\text{$m$ odd}\,,
\end{equation}

\begin{equation}
\sum\limits_{n = 1}^\infty {\tan ^{ - 1} \frac{{F_m^2 L_{2n + 2m} }}{{F_{2n + 2m}^2 }}} = \sum\limits_{n = 1}^m {\tan ^{ - 1} \frac{{F_m }}{{F_{2n + m} }}}\,,\quad\text{$m$ even}\,.
\end{equation}

\end{corollary}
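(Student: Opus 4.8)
The plan is to derive the four identities of the Corollary by passing to the limit $t\to\infty$ in the four identities of the preceding Theorem. Throughout, fix a positive integer $m$ and recall that $n\ge 1$ and $t\ge 0$, so every subscript occurring below is a positive integer; consequently all of the Fibonacci and Lucas numbers involved are positive, every argument of $\tan^{-1}$ appearing is positive, and every summand on either side of each identity is a positive real number.

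The first step is to establish that the series on the left of each identity converges. Take \eqref{eq.u408m58} as the representative case. For each $t$, the partial sum $S_t=\sum_{n=1}^{t}\tan^{-1}(F_{2m}/F_{2n+2m-1})$ equals, by the Theorem, $\sum_{n=1}^{m}\tan^{-1}(L_m/L_{2n+m-1})-\sum_{n=1}^{m}\tan^{-1}(L_m/L_{2n+2t+m-1})$; the subtracted sum is a sum of positive terms, so $S_t\le\sum_{n=1}^{m}\tan^{-1}(L_m/L_{2n+m-1})$, a bound independent of $t$. Since the sequence $(S_t)_{t\ge 0}$ is also nondecreasing (each successive summand is positive), it converges; that is, the series on the left of \eqref{eq.g5vsa8d} converges. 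The same argument applies verbatim to \eqref{eq.i2yrfn1} and to the remaining two identities of the Theorem, each partial sum being dominated by the corresponding $t$-independent first sum on the right.

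The second step is to show that the ``error'' sums on the right vanish as $t\to\infty$. In each identity of the Theorem this error term is a sum of exactly $m$ arctangents, each of the form $\tan^{-1}(L_m/L_{2n+2t+m-1})$, $\tan^{-1}(F_m/F_{2n+2t+m-1})$, $\tan^{-1}(L_m/L_{2n+2t+m})$, or $\tan^{-1}(F_m/F_{2n+2t+m})$ with $1\le n\le m$. As $t\to\infty$ the relevant subscripts tend to infinity, and since $F_k\to\infty$ and $L_k\to\infty$ as $k\to\infty$, each such argument tends to $0$; by continuity of $\tan^{-1}$ at $0$ (with $\tan^{-1}0=0$), each of these $m$ terms tends to $0$, and a sum of finitely many terms each tending to $0$ tends to $0$. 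Hence, letting $t\to\infty$ in each identity of the Theorem, the left side tends to the corresponding infinite series (which exists by the first step) while the right side tends to its first sum, which is precisely the asserted identity.

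I expect no genuine difficulty beyond this bookkeeping; the substantive content already resides in Lemma \ref{lemma.id} and the preceding Theorem. The one point that does require care — and the reason the limit cannot merely be taken termwise on the right — is the convergence of the series on the left, and this is exactly what the monotone-and-bounded argument of the first step supplies, the boundedness being a consequence of the positivity of the discarded error sums.
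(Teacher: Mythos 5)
Your proposal is correct and follows the same route as the paper: the paper obtains the corollary by letting $t\to\infty$ in the preceding Theorem, with the tail sums vanishing because their arguments tend to zero (this is the paper's stated principle that $\sum_{n=1}^\infty(X_n-X_{n+m})=\sum_{n=1}^m X_n$ when $X_k\to 0$). Your monotone-and-bounded justification of the convergence of the left-hand series is a detail the paper leaves implicit, but it is the same argument in substance.
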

Lemma \ref{lemma.id} and identities \eqref{eq.gdtp5sa}, \eqref{eq.ryx4qng}, \eqref{eq.f94zdr0}, \eqref{eq.i97s6g6} and the telescoping summation identities \eqref{eq.n5lm399} and \eqref{eq.e3jcedw} give:
\begin{theorem}\label{theorem.q30p5d6}
The following identities hold for non-negative integers $t$ and $m$:
\begin{equation}\label{eq.tn4uryc}
\begin{split}
\sum\limits_{n = 1}^t {(-1)^{n-1}\tan ^{ - 1} \frac{{F_{2m} }}{{F_{2n + 2m - 1} }}}  &= \sum\limits_{n = 1}^m {(-1)^{n-1}\tan ^{ - 1} \frac{{L_m }}{{L_{2n + m - 1} }}}\\
&\qquad  + (-1)^{t-1}\sum\limits_{n = 1}^m {(-1)^{n-1}\tan ^{ - 1} \frac{{L_m }}{{L_{2n + 2t + m - 1} }}}\,,
\end{split}
\end{equation}
\begin{equation}
\begin{split}
\sum\limits_{n = 1}^t {(-1)^{n -1 } \tan ^{ - 1} \frac{{F_m^2 L_{2n + 2m} }}{{F_{2n + 2m}^2 }}} &= \sum\limits_{n = 1}^m {(-1)^{n -1 } \tan ^{ - 1} \frac{{F_m }}{{F_{2n + m} }}}\\
&\qquad + (-1)^{t-1} \sum\limits_{n = 1}^m {(-1)^{n -1 } \tan ^{ - 1} \frac{{F_m }}{{F_{2n + 2t + m} }}}\,.
\end{split}
\end{equation}
\end{theorem}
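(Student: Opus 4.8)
The plan is to treat the two identities in parallel, each splitting into an $m$-even and an $m$-odd case that will be reconciled into a single statement by the uniform shape of the telescoping formulas \eqref{eq.n5lm399} and \eqref{eq.e3jcedw}.

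First I would prove \eqref{eq.tn4uryc}. Set $X_n = \tan^{-1}\dfrac{L_m}{L_{2n+m-1}}$, so that, using $2(n+m)+m-1 = 2n+3m-1$, one has $X_{n+m} = \tan^{-1}\dfrac{L_m}{L_{2n+3m-1}}$ and, with $k=t$, $X_{n+k} = \tan^{-1}\dfrac{L_m}{L_{2n+2t+m-1}}$. If $m$ is even, identity \eqref{eq.gdtp5sa} of Lemma \ref{lemma.id} gives $\tan^{-1}\dfrac{F_{2m}}{F_{2n+2m-1}} = X_n - X_{n+m}$; multiplying by $(-1)^{n-1}$, summing over $1\le n\le t$, and invoking \eqref{eq.n5lm399} (applicable since $m$ is even) produces exactly the right-hand side of \eqref{eq.tn4uryc}. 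If $m$ is odd, identity \eqref{eq.ryx4qng} gives $\tan^{-1}\dfrac{F_{2m}}{F_{2n+2m-1}} = X_n + X_{n+m}$ instead, and now \eqref{eq.e3jcedw} (applicable since $m$ is odd) again yields the same right-hand side. Because the two telescoping formulas share an identical right-hand side, the two parity cases collapse to the single formula \eqref{eq.tn4uryc} for all non-negative integers $m$ and $t$; the degenerate cases $m=0$ (both sides $0$ because $F_0=0$) and $t=0$ (the two right-hand side sums cancel) are immediate.

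The second identity is handled the same way with $Y_n = \tan^{-1}\dfrac{F_m}{F_{2n+m}}$, so that $Y_{n+m} = \tan^{-1}\dfrac{F_m}{F_{2n+3m}}$ and $Y_{n+t} = \tan^{-1}\dfrac{F_m}{F_{2n+2t+m}}$. For $m$ even, \eqref{eq.f94zdr0} gives $\tan^{-1}\dfrac{F_m^2 L_{2n+2m}}{F_{2n+2m}^2} = Y_n - Y_{n+m}$, and \eqref{eq.n5lm399} completes the argument; for $m$ odd, \eqref{eq.i97s6g6} gives $\tan^{-1}\dfrac{F_m^2 L_{2n+2m}}{F_{2n+2m}^2} = Y_n + Y_{n+m}$, and \eqref{eq.e3jcedw} completes it, with the same right-hand side in both cases.

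The computations are routine once these substitutions are in place; the one point requiring care — and the step I expect to be the main obstacle — is checking that the parity restrictions dovetail: the ``difference'' identities in Lemma \ref{lemma.id} are exactly the ones available when $m$ is even, which is precisely when \eqref{eq.n5lm399} applies, while the ``sum'' identities are exactly the ones available when $m$ is odd, which is precisely when \eqref{eq.e3jcedw} applies; and that these two telescoping identities were designed to have the same right-hand side, so no case distinction survives in the final statement. One should also confirm that the index translation $n\mapsto n+m$ inside $X_n$ and $Y_n$ reproduces the second summand of the Lemma identities verbatim, which it does by the elementary identities $2(n+m)+m-1 = 2n+3m-1$ and $2(n+m)+m = 2n+3m$.
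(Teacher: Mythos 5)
Your proposal is correct and follows exactly the route the paper intends: combining \eqref{eq.gdtp5sa}/\eqref{eq.ryx4qng} (resp.\ \eqref{eq.f94zdr0}/\eqref{eq.i97s6g6}) with the telescoping identities \eqref{eq.n5lm399} and \eqref{eq.e3jcedw}, whose common right-hand side merges the two parity cases into one statement. The paper gives no more detail than this, so your write-up is simply a careful expansion of the same argument.
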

In the limit as $t\to\infty$ in the identities of Theorem \ref{theorem.q30p5d6}, we have:
\begin{corollary}
The following identities hold for non-negative integers $m$:
\begin{equation}\label{eq.jpvgd6p}
\sum\limits_{n = 1}^\infty {(-1)^{n-1}\tan ^{ - 1} \frac{{F_{2m} }}{{F_{2n + 2m - 1} }}}  = \sum\limits_{n = 1}^m {(-1)^{n-1}\tan ^{ - 1} \frac{{L_m }}{{L_{2n + m - 1} }}},
\end{equation}
\begin{equation}
\sum\limits_{n = 1}^\infty {(-1)^{n -1 } \tan ^{ - 1} \frac{{F_m^2 L_{2n + 2m} }}{{F_{2n + 2m}^2 }}} = \sum\limits_{n = 1}^m {(-1)^{n -1 } \tan ^{ - 1} \frac{{F_m }}{{F_{2n + m} }}}\,.
\end{equation}
\end{corollary}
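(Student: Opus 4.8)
The plan is to obtain both identities simply by letting $t\to\infty$ in the two identities of Theorem~\ref{theorem.q30p5d6}, so the only substantive work is to show that the ``remainder'' sums on the right-hand sides vanish in the limit and that the left-hand partial sums genuinely converge. Write the first identity of Theorem~\ref{theorem.q30p5d6} as $S_t = A + (-1)^{t-1} R_t$, where $A=\sum_{n=1}^m(-1)^{n-1}\tan^{-1}(L_m/L_{2n+m-1})$ is independent of $t$ and $R_t=\sum_{n=1}^m(-1)^{n-1}\tan^{-1}(L_m/L_{2n+2t+m-1})$; similarly for the second identity with $F$'s in place of $L$'s.

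First I would record the elementary growth facts. By the Binet formulas (or simply because $F_k,L_k\to\infty$), for each fixed $n$ in the finite range $1\le n\le m$ one has $L_{2n+2t+m-1}\to\infty$ and $F_{2n+2t+m}\to\infty$ as $t\to\infty$, hence
\[
\tan^{-1}\frac{L_m}{L_{2n+2t+m-1}}\to 0,\qquad \tan^{-1}\frac{F_m}{F_{2n+2t+m}}\to 0 .
\]
Since $R_t$ is a finite sum of exactly $m$ terms, each tending to $0$, we get $R_t\to 0$, and as $|(-1)^{t-1}|=1$ the full tail contribution $(-1)^{t-1}R_t\to 0$.

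Next I would address convergence of the series on the left. Using $0\le\tan^{-1}x\le x$ for $x\ge 0$ together with Binet's formula, $\tan^{-1}(F_{2m}/F_{2n+2m-1})\le F_{2m}/F_{2n+2m-1}=O(\varphi^{-2n})$ for fixed $m$, and likewise $\tan^{-1}\!\big(F_m^2 L_{2n+2m}/F_{2n+2m}^2\big)\le F_m^2 L_{2n+2m}/F_{2n+2m}^2=O(\varphi^{-2n})$; thus both series converge absolutely and the partial sums $S_t$ converge. (Alternatively, and more economically, once $(-1)^{t-1}R_t\to0$ the identity $S_t=A+(-1)^{t-1}R_t$ by itself forces $S_t\to A$.) Letting $t\to\infty$ in each identity of Theorem~\ref{theorem.q30p5d6} then yields exactly \eqref{eq.jpvgd6p} and its companion; the case $m=0$ is trivial since $F_0=0$ makes both sides vanish. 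There is essentially no obstacle here --- the one point deserving a word of care is the justification that the left-hand partial sums converge, which is handled either by the absolute-convergence comparison above or directly from the finite identity once the remainder term is shown to vanish.
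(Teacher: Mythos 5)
Your proposal is correct and follows exactly the paper's route: the paper likewise obtains the corollary by letting $t\to\infty$ in Theorem~\ref{theorem.q30p5d6}, relying on the fact (recorded in its general telescoping limit formulas) that the arctangent terms $\tan^{-1}(L_m/L_{2n+2t+m-1})$ and $\tan^{-1}(F_m/F_{2n+2t+m})$ vanish as $t\to\infty$. The only difference is that you spell out the convergence justification (the finite remainder sum of $m$ terms tending to zero, and the comparison $\tan^{-1}x\le x$ for the left-hand series), which the paper leaves implicit.
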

\section{Generalizations of the identities of Hoggatt and Ruggles}
\begin{lemma}
Let $(X_i)$ be a sequence of real numbers and let $m$ and $t$ be integers. Then,
\begin{equation}\label{eq.y808d4b}
2\sum\limits_{n = 1}^t {X_n }  = \sum\limits_{n = 1}^t {(X_n + X_{n + m} )}  + \sum\limits_{n = 1}^m {X_n }  - \sum\limits_{n = t + 1}^{t + m} {X_n }\,,
\end{equation}
\begin{equation}\label{eq.ckksnfx}
2\sum\limits_{n = 1}^t {( - 1)^{n - 1} X_n }  = \sum\limits_{n = 1}^t {( - 1)^{n - 1} (X_n  - X_{n + m} )}  + \sum\limits_{n = 1}^m {( - 1)^{n - 1} X_n }  - \sum\limits_{n = t + 1}^{t + m} {( - 1)^{n - 1} X_n },\text{ $m$ odd}\,,
\end{equation}
\begin{equation}\label{eq.jg71xlc}
2\sum\limits_{n = 1}^t {( - 1)^{n - 1} X_n }  = \sum\limits_{n = 1}^t {( - 1)^{n - 1} (X_n  + X_{n + m} )}  + \sum\limits_{n = 1}^m {( - 1)^{n - 1} X_n }  - \sum\limits_{n = t + 1}^{t + m} {( - 1)^{n - 1} X_n },\text{ $m$ even}\,.
\end{equation}
\end{lemma}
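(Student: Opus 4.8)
The plan is to derive all three identities from the single trivial splitting $2X_n=(X_n+X_{n+m})+(X_n-X_{n+m})$, summed against the appropriate weight, followed by exactly one application of a telescoping identity from the list \eqref{eq.s1qn9qb}--\eqref{eq.e3jcedw} and a routine reindexing of a shifted sum. For \eqref{eq.y808d4b}, I would sum $2X_n=(X_n+X_{n+m})+(X_n-X_{n+m})$ over $1\le n\le t$ to get $2\sum_{n=1}^t X_n=\sum_{n=1}^t(X_n+X_{n+m})+\sum_{n=1}^t(X_n-X_{n+m})$, then apply \eqref{eq.s1qn9qb} with $k=t$ to the second sum: $\sum_{n=1}^t(X_n-X_{n+m})=\sum_{n=1}^m(X_n-X_{n+t})=\sum_{n=1}^m X_n-\sum_{n=t+1}^{t+m}X_n$, the last step being the reindexing $n\mapsto n-t$. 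This is precisely \eqref{eq.y808d4b}.

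For \eqref{eq.ckksnfx} and \eqref{eq.jg71xlc} I would weight the splitting by $(-1)^{n-1}$ and sum over $1\le n\le t$, obtaining $2\sum_{n=1}^t(-1)^{n-1}X_n=\sum_{n=1}^t(-1)^{n-1}(X_n-X_{n+m})+\sum_{n=1}^t(-1)^{n-1}(X_n+X_{n+m})$. If $m$ is odd I apply \eqref{eq.e3jcedw} with $k=t$ to the $(X_n+X_{n+m})$ sum; if $m$ is even I apply \eqref{eq.n5lm399} with $k=t$ to the $(X_n-X_{n+m})$ sum. In either case the telescoped sum evaluates to $\sum_{n=1}^m(-1)^{n-1}X_n+(-1)^{t-1}\sum_{n=1}^m(-1)^{n-1}X_{n+t}$, while the untouched sum is $\sum_{n=1}^t(-1)^{n-1}(X_n-X_{n+m})$ for odd $m$ (respectively $\sum_{n=1}^t(-1)^{n-1}(X_n+X_{n+m})$ for even $m$), which is exactly the first term on the right of \eqref{eq.ckksnfx} (respectively \eqref{eq.jg71xlc}). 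It then remains only to recast $(-1)^{t-1}\sum_{n=1}^m(-1)^{n-1}X_{n+t}$: substituting $j=n+t$ and using $(-1)^{n-1}=(-1)^{t}(-1)^{j-1}$ turns it into $(-1)^{2t-1}\sum_{j=t+1}^{t+m}(-1)^{j-1}X_j=-\sum_{n=t+1}^{t+m}(-1)^{n-1}X_n$, supplying the last term of \eqref{eq.ckksnfx} and \eqref{eq.jg71xlc}.

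I expect essentially no genuine obstacle: every step is either the definition-level splitting, a direct citation of \eqref{eq.s1qn9qb}--\eqref{eq.e3jcedw}, or a reindexing. The one place that demands care is the sign bookkeeping in the last reindexing, where the parity shift introduces a factor $(-1)^{t}$ that must combine with the prefactor $(-1)^{t-1}$ furnished by \eqref{eq.n5lm399}/\eqref{eq.e3jcedw} to produce exactly $-1$; getting this right is what forces the minus sign on the tail sum $\sum_{n=t+1}^{t+m}(-1)^{n-1}X_n$. For non-positive $m$ or $t$ one simply adopts the standard conventions that an empty sum is $0$ and $\sum_{n=a}^{b}=-\sum_{n=b+1}^{a-1}$ when $b<a-1$, under which \eqref{eq.s1qn9qb}--\eqref{eq.e3jcedw}, and hence all three identities, continue to hold. (As a sanity check, one may also verify \eqref{eq.y808d4b} directly by the single reindexing $\sum_{n=1}^t X_{n+m}+\sum_{n=1}^m X_n=\sum_{n=1}^{t+m}X_n=\sum_{n=1}^t X_n+\sum_{n=t+1}^{t+m}X_n$, which bypasses \eqref{eq.s1qn9qb} entirely.)
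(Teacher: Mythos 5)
Your proof is correct. It differs in mechanism from the paper's: the paper reindexes $\sum_{n=1}^{t}X_{n+m}=\sum_{n=m+1}^{t+m}X_n$ directly to obtain \eqref{eq.y808d4b} in three lines, and then gets \eqref{eq.ckksnfx} and \eqref{eq.jg71xlc} in one stroke by substituting $(-1)^nX_n$ for $X_n$ in \eqref{eq.y808d4b} and sorting by the parity of $m$. You instead start from the splitting $2X_n=(X_n+X_{n+m})+(X_n-X_{n+m})$ and outsource the telescoping to the previously stated identities \eqref{eq.s1qn9qb}, \eqref{eq.n5lm399} and \eqref{eq.e3jcedw}, finishing with a reindexing whose sign bookkeeping you track correctly ($(-1)^{t-1}\cdot(-1)^{t}=-1$ produces the minus on the tail sum). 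Your route makes the lemma a formal corollary of the telescoping identities already on record and treats the three cases uniformly, at the cost of invoking those identities as black boxes; the paper's route is self-contained and shorter, since the substitution $X_n\mapsto(-1)^nX_n$ dispenses with separate sign analyses. Your closing parenthetical reindexing of \eqref{eq.y808d4b} is in fact exactly the paper's argument, and your remark on conventions for non-positive $m$ or $t$ addresses a point the paper leaves implicit.
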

\begin{proof}
We have
\[
\begin{split}
\sum\limits_{n = 1}^t {(X_{n + m}  + X_n )}  &= \sum\limits_{n = m + 1}^{t + m} {X_n }  + \sum\limits_{n = 1}^t {X_n }\\ 
 &= \sum\limits_{n = 1}^{t + m} {X_n }  - \sum\limits_{n = 1}^m {X_n }  + \sum\limits_{n = 1}^t {X_n }\\ 
&= 2\sum\limits_{n = 1}^t {X_n }  + \sum\limits_{n = t + 1}^{t + m} {X_n }  - \sum\limits_{n = 1}^m {X_n }\,, 
\end{split}
\]
from which identity \eqref{eq.y808d4b} follows. Identities \eqref{eq.ckksnfx} and \eqref{eq.jg71xlc} are proved when we replace $X_n$ by $(-1)^nX_n$ in identity \eqref{eq.y808d4b}.
\end{proof}
\begin{theorem}\label{theorem.cf5bgud}
The following identities hold for $t$ and $m$ non-negative integers:
\begin{equation}\label{eq.fmpz6qn}
\begin{split}
2\sum\limits_{n = 1}^t {\tan ^{ - 1} \frac{{L_m }}{{L_{2n + m - 1} }}}  &= \sum\limits_{n = 1}^m {\tan ^{ - 1} \frac{2}{{L_{2n - 1} }}}  - \sum\limits_{n = 1}^m {\tan ^{ - 1} \frac{{F_m }}{{F_{2n + 2t + m - 1} }}}\\
&\qquad- \sum\limits_{n = t + 1}^{t + m} {\tan ^{ - 1} \frac{{L_m }}{{L_{2n + m - 1} }}},\quad\text{ $m$ odd}\,,
\end{split}
\end{equation}

\begin{equation}\label{eq.rnshzqs}
\begin{split}
2\sum\limits_{n = 1}^t {\tan ^{ - 1} \frac{{F_m }}{{F_{2n + m - 1} }}}  &= \sum\limits_{n = 1}^m {\tan ^{ - 1} \frac{2}{{L_{2n - 1} }}}  - \sum\limits_{n = 1}^m {\tan ^{ - 1} \frac{{L_m }}{{L_{2n + 2t + m - 1} }}}\\
&\qquad- \sum\limits_{n = t + 1}^{t + m} {\tan ^{ - 1} \frac{{F_m }}{{F_{2n + m - 1} }}},\quad\text{ $m$ even}\,,
\end{split}
\end{equation}

\begin{equation}\label{eq.evkxy48}
\begin{split}
2\sum\limits_{n = 1}^t {( - 1)^{n - 1} \tan ^{ - 1} \frac{{F_m }}{{F_{2n + m - 1} }}}  &= \sum\limits_{n = 1}^m {( - 1)^{n - 1} \tan ^{ - 1} \frac{2}{{L_{2n - 1} }}}\\
&\quad  + ( - 1)^{t - 1} \sum\limits_{n = 1}^m {( - 1)^{n - 1} \tan ^{ - 1} \frac{{L_m }}{{L_{2n + 2t + m - 1} }}}\\
&\qquad- \sum\limits_{n = t + 1}^{t + m} {( - 1)^{n - 1} \tan ^{ - 1} \frac{{F_m }}{{F_{2n + m - 1} }}}\,.
\end{split}
\end{equation}
\end{theorem}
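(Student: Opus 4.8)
The plan is to obtain each of \eqref{eq.fmpz6qn}, \eqref{eq.rnshzqs} and \eqref{eq.evkxy48} by the same three-step manoeuvre: apply the purely combinatorial summation identity of the preceding lemma to a well-chosen sequence $(X_n)$ of arctangents; use Lemma~\ref{lemma.id} to collapse the shifted combination $X_n\pm X_{n+m}$ into a single term $\tan^{-1}(F_{2m}/F_{2n+2m-1})$; then feed the resulting sum $\sum_{n=1}^{t}\tan^{-1}(F_{2m}/F_{2n+2m-1})$ into one of the closed forms \eqref{eq.u408m58}, \eqref{eq.i2yrfn1}, \eqref{eq.tn4uryc} already proven; and finally use the splitting identity \eqref{eq.gxld0li} to glue the two leftover $\sum_{n=1}^{m}$ pieces into $\sum_{n=1}^{m}\tan^{-1}(2/L_{2n-1})$.

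For \eqref{eq.fmpz6qn}, which is stated for $m$ odd, I would put $X_n=\tan^{-1}(L_m/L_{2n+m-1})$ in \eqref{eq.y808d4b}. Then $X_{n+m}=\tan^{-1}(L_m/L_{2n+3m-1})$, so \eqref{eq.ryx4qng} gives $X_n+X_{n+m}=\tan^{-1}(F_{2m}/F_{2n+2m-1})$; hence $\sum_{n=1}^{t}(X_n+X_{n+m})=\sum_{n=1}^{t}\tan^{-1}(F_{2m}/F_{2n+2m-1})$, which by \eqref{eq.i2yrfn1} equals $\sum_{n=1}^{m}\tan^{-1}(F_m/F_{2n+m-1})-\sum_{n=1}^{m}\tan^{-1}(F_m/F_{2n+2t+m-1})$. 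The term $\sum_{n=1}^{m}X_n=\sum_{n=1}^{m}\tan^{-1}(L_m/L_{2n+m-1})$ coming from \eqref{eq.y808d4b} then combines with $\sum_{n=1}^{m}\tan^{-1}(F_m/F_{2n+m-1})$ through \eqref{eq.gxld0li} to give $\sum_{n=1}^{m}\tan^{-1}(2/L_{2n-1})$, while $-\sum_{n=t+1}^{t+m}X_n$ is already the last sum in \eqref{eq.fmpz6qn}, so the identity drops out. Identity \eqref{eq.rnshzqs} ($m$ even) is handled identically with $X_n=\tan^{-1}(F_m/F_{2n+m-1})$: here \eqref{eq.imhso0b} gives $X_n+X_{n+m}=\tan^{-1}(F_{2m}/F_{2n+2m-1})$, \eqref{eq.u408m58} rewrites $\sum_{n=1}^{t}\tan^{-1}(F_{2m}/F_{2n+2m-1})$ in terms of Lucas arctangents, and \eqref{eq.gxld0li} again merges the two $\sum_{n=1}^{m}$ blocks into $\sum_{n=1}^{m}\tan^{-1}(2/L_{2n-1})$.

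For the alternating identity \eqref{eq.evkxy48} I would again take $X_n=\tan^{-1}(F_m/F_{2n+m-1})$, but split on the parity of $m$: when $m$ is odd, apply \eqref{eq.ckksnfx} and use \eqref{eq.jf9ybp9} to get $X_n-X_{n+m}=\tan^{-1}(F_{2m}/F_{2n+2m-1})$; when $m$ is even, apply \eqref{eq.jg71xlc} and use \eqref{eq.imhso0b} to get $X_n+X_{n+m}=\tan^{-1}(F_{2m}/F_{2n+2m-1})$. In either case the alternating telescoping block becomes $\sum_{n=1}^{t}(-1)^{n-1}\tan^{-1}(F_{2m}/F_{2n+2m-1})$, which by \eqref{eq.tn4uryc} --- valid for every non-negative $m$ --- equals $\sum_{n=1}^{m}(-1)^{n-1}\tan^{-1}(L_m/L_{2n+m-1})+(-1)^{t-1}\sum_{n=1}^{m}(-1)^{n-1}\tan^{-1}(L_m/L_{2n+2t+m-1})$. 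Adding the $\sum_{n=1}^{m}(-1)^{n-1}X_n$ term from \eqref{eq.ckksnfx}/\eqref{eq.jg71xlc} and combining it with $\sum_{n=1}^{m}(-1)^{n-1}\tan^{-1}(L_m/L_{2n+m-1})$ via \eqref{eq.gxld0li} yields $\sum_{n=1}^{m}(-1)^{n-1}\tan^{-1}(2/L_{2n-1})$, which is exactly \eqref{eq.evkxy48}.

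Since every nontrivial identity has already been established, the work here is bookkeeping rather than analysis; the only points needing care are the index shift $2(n+m)+m-1=2n+3m-1$, the parity matching (each claim must invoke the telescoping identity and the Lemma~\ref{lemma.id} relation of the correct parity), and the remark that all arctangent arguments occurring are positive and small, so the additions licensed by \eqref{eq.pj13if2} and \eqref{eq.x7ooe1a} are already covered within the proofs of Lemma~\ref{lemma.id} and \eqref{eq.gxld0li} and require no new verification. The mildly delicate case is \eqref{eq.evkxy48}, where two parity-dependent ingredients must conspire to produce a single parity-free statement; confirming that they do is the main thing to check.
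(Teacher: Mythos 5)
Your proposal is correct and follows essentially the same route as the paper's own proof: the same choices of $X_n$ in \eqref{eq.y808d4b}, \eqref{eq.ckksnfx} and \eqref{eq.jg71xlc}, the same use of \eqref{eq.ryx4qng}, \eqref{eq.imhso0b} and \eqref{eq.jf9ybp9} to collapse $X_n\pm X_{n+m}$, the same appeal to \eqref{eq.i2yrfn1}, \eqref{eq.u408m58} and \eqref{eq.tn4uryc}, and the same final merge via \eqref{eq.gxld0li}, including the parity split for \eqref{eq.evkxy48}. No gaps.
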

\begin{proof}
Setting $X_n=\tan^{-1}({L_m/L_{2n + m - 1}})$ in \eqref{eq.y808d4b} and making use of \eqref{eq.ryx4qng} gives
\[
\begin{split}
2\sum\limits_{n = 1}^t {\tan ^{ - 1} \frac{{L_m }}{{L_{2n + m - 1} }}}  &= \sum\limits_{n = 1}^t {\tan ^{ - 1} \frac{{F_{2m} }}{{F_{2n + 2m - 1} }}}  + \sum\limits_{n = 1}^m {\tan ^{ - 1} \frac{{L_m }}{{L_{2n + m - 1} }}}\\
&\qquad- \sum\limits_{n = t + 1}^{t + m} {\tan ^{ - 1} \frac{{L_m }}{{L_{2n + m - 1} }}}\,,
\end{split}
\]
which by \eqref{eq.i2yrfn1} is
\[
\begin{split}
2\sum\limits_{n = 1}^t {\tan ^{ - 1} \frac{{L_m }}{{L_{2n + m - 1} }}}  &= \sum\limits_{n = 1}^m {\tan ^{ - 1} \frac{{F_m }}{{F_{2n + m - 1} }}}  - \sum\limits_{n = 1}^m {\tan ^{ - 1} \frac{{F_m }}{{F_{2n + 2t + m - 1} }}}\\
&\qquad+ \sum\limits_{n = 1}^m {\tan ^{ - 1} \frac{{L_m }}{{L_{2n + m - 1} }}}  - \sum\limits_{n = t + 1}^{t + m} {\tan ^{ - 1} \frac{{L_m }}{{L_{2n + m - 1} }}}\,, 
\end{split}
\]
from which identity \eqref{eq.fmpz6qn} follows when we make use of idetity \eqref{eq.gxld0li}.

\medskip

The proof of \eqref{eq.rnshzqs} is similar. Use $X_n=\tan^{-1}({F_m/F_{2n + m - 1}})$ in \eqref{eq.y808d4b} and make use of \eqref{eq.imhso0b}, \eqref{eq.u408m58} and \eqref{eq.gxld0li}.

\medskip

To prove \eqref{eq.evkxy48}, first set $X_n=\tan^{-1}({F_m/F_{2n + m - 1}})$ in \eqref{eq.ckksnfx} and use \eqref{eq.jf9ybp9}, \eqref{eq.tn4uryc} and \eqref{eq.gxld0li}. This gives
\begin{equation}\label{eq.bnei8i0}
\begin{split}
2\sum\limits_{n = 1}^t {( - 1)^{n - 1} \tan ^{ - 1} \frac{{F_m }}{{F_{2n + m - 1} }}}  &= \sum\limits_{n = 1}^m {( - 1)^{n - 1} \tan ^{ - 1} \frac{2}{{L_{2n - 1} }}}\\
&\quad  + ( - 1)^{t - 1} \sum\limits_{n = 1}^m {( - 1)^{n - 1} \tan ^{ - 1} \frac{{L_m }}{{L_{2n + 2t + m - 1} }}}\\
&\qquad- \sum\limits_{n = t + 1}^{t + m} {( - 1)^{n - 1} \tan ^{ - 1} \frac{{F_m }}{{F_{2n + m - 1} }}},\quad\text{ $m$ odd}\,.
\end{split}
\end{equation}
Next set $X_n=\tan^{-1}({F_m/F_{2n + m - 1}})$ in \eqref{eq.jg71xlc} and use \eqref{eq.imhso0b}, \eqref{eq.tn4uryc} and \eqref{eq.gxld0li}. This gives
\begin{equation}\label{eq.ccvkpe2}
\begin{split}
2\sum\limits_{n = 1}^t {( - 1)^{n - 1} \tan ^{ - 1} \frac{{F_m }}{{F_{2n + m - 1} }}}  &= \sum\limits_{n = 1}^m {( - 1)^{n - 1} \tan ^{ - 1} \frac{2}{{L_{2n - 1} }}}\\
&\quad  + ( - 1)^{t - 1} \sum\limits_{n = 1}^m {( - 1)^{n - 1} \tan ^{ - 1} \frac{{L_m }}{{L_{2n + 2t + m - 1} }}}\\
&\qquad- \sum\limits_{n = t + 1}^{t + m} {( - 1)^{n - 1} \tan ^{ - 1} \frac{{F_m }}{{F_{2n + m - 1} }}},\quad\text{ $m$ even}\,.
\end{split}
\end{equation}
Identities \eqref{eq.bnei8i0} and \eqref{eq.ccvkpe2} imply identity \eqref{eq.evkxy48}.
\end{proof}
\begin{lemma}\label{lemma.af897g0}
Let $X_i$ be a sequence of real numbers such that $\lim\limits_{i\to\infty}{X_i}=0$. Then,
\begin{equation}\label{eq.ulwifpc}
2\sum\limits_{n = 1}^\infty  {X_n }  = \sum\limits_{n = 1}^m {X_n } + \sum\limits_{n = 1}^\infty  {(X_n + X_{n + m} )},\text{ $m$ integer}\,, 
\end{equation}
\begin{equation}\label{eq.yxh891p}
2\sum\limits_{n = 1}^\infty  {(-1)^{n-1}X_n }  = \sum\limits_{n = 1}^m {(-1)^{n-1}X_n } + \sum\limits_{n = 1}^\infty  {(-1)^{n-1}(X_n - X_{n + m})},\text{ $m$ odd}\,, 
\end{equation}
\begin{equation}\label{eq.hrweot0}
2\sum\limits_{n = 1}^\infty  {(-1)^{n-1}X_n }  = \sum\limits_{n = 1}^m {(-1)^{n-1}X_n } + \sum\limits_{n = 1}^\infty  {(-1)^{n-1}(X_n + X_{n + m})},\text{ $m$ even}\,. 
\end{equation}
\end{lemma}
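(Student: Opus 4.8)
The plan is to obtain the three identities as the $t\to\infty$ limits of identities \eqref{eq.y808d4b}, \eqref{eq.ckksnfx} and \eqref{eq.jg71xlc} of the preceding lemma, the only new ingredient being that the finite correction term $\sum_{n=t+1}^{t+m}X_n$ (or its alternating analogue) vanishes in the limit.

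First I would rewrite \eqref{eq.y808d4b} in the form
\[
2\sum_{n=1}^{t}X_n-\sum_{n=1}^{t}(X_n+X_{n+m})=\sum_{n=1}^{m}X_n-\sum_{n=t+1}^{t+m}X_n.
\]
The last sum on the right is a block of $m$ consecutive terms of the sequence $(X_i)$; since $\lim_{i\to\infty}X_i=0$, each of the $m$ summands $X_{t+1},\dots,X_{t+m}$ tends to $0$ as $t\to\infty$, so $\sum_{n=t+1}^{t+m}X_n\to 0$. Letting $t\to\infty$ (under the standing assumption, valid in all applications here, that the series involved converge) then yields \eqref{eq.ulwifpc}.

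For \eqref{eq.yxh891p} and \eqref{eq.hrweot0} I would apply the same passage to the limit in \eqref{eq.ckksnfx} and \eqref{eq.jg71xlc}; the correction term there is $\sum_{n=t+1}^{t+m}(-1)^{n-1}X_n$, whose modulus is at most $\sum_{n=t+1}^{t+m}|X_n|\to 0$ by the same reasoning, so it again drops out. Alternatively — and perhaps more transparently — one can derive all three at once: \eqref{eq.ulwifpc} is immediate from the index shift $\sum_{n=1}^{\infty}X_{n+m}=\sum_{n=m+1}^{\infty}X_n=\sum_{n=1}^{\infty}X_n-\sum_{n=1}^{m}X_n$, and then replacing $X_n$ by $(-1)^nX_n$ throughout produces \eqref{eq.yxh891p} when $m$ is odd (because $(-1)^{n+m}=-(-1)^n$, turning the $+X_{n+m}$ into $-X_{n+m}$) and \eqref{eq.hrweot0} when $m$ is even (because $(-1)^{n+m}=(-1)^n$), exactly mirroring the finite-sum argument already given for the previous lemma.

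I do not expect any real obstacle: the substantive bookkeeping of index shifts has already been done in the finite case, and the one genuinely new point — that a window of $m$ consecutive terms of a null sequence has sum tending to $0$ — is elementary. The only thing deserving a word of care is that every manipulation presupposes convergence of $\sum_{n=1}^{\infty}X_n$ (equivalently of $\sum_{n=1}^{\infty}(-1)^{n-1}X_n$); in each later use the $X_n$ are arctangents of ratios of Fibonacci or Lucas numbers that decay geometrically, so this is automatic.
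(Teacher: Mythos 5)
Your proof is correct and follows the approach the paper implicitly intends: the lemma is stated without proof, being the $t\to\infty$ limit of identities \eqref{eq.y808d4b}--\eqref{eq.jg71xlc} (equivalently, an index shift), with the block $\sum_{n=t+1}^{t+m}X_n$ of $m$ terms of a null sequence vanishing in the limit. Your caveat that convergence of $\sum_{n=1}^{\infty}X_n$ (respectively $\sum_{n=1}^{\infty}(-1)^{n-1}X_n$) must be assumed --- since $X_i\to 0$ alone does not guarantee it --- is a point the paper glosses over, and is worth retaining.
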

Taking limit as $t$ goes to infinity in Theorem \ref{theorem.cf5bgud} or using Lemma \ref{lemma.af897g0} with the $X_n$ choices in the proof of Theorem \ref{theorem.cf5bgud}, we have:
\begin{corollary}
The following identities hold for non-negative integers $m$:
\begin{equation}
2\sum\limits_{n = 1}^\infty  {\tan ^{ - 1} \frac{{L_m }}{{L_{2n + m - 1} }}}  = \sum\limits_{n = 1}^m {\tan ^{ - 1} \frac{2}{{L_{2n - 1} }}},\text{ $m$ odd}\,,
\end{equation}
\begin{equation}\label{eq.yun9igl}
2\sum\limits_{n = 1}^\infty  {\tan ^{ - 1} \frac{{F_m }}{{F_{2n + m - 1} }}}  = \sum\limits_{n = 1}^m {\tan ^{ - 1} \frac{2}{{L_{2n - 1} }}},\text{ $m$ even}\,,
\end{equation}
\begin{equation}\label{eq.otv9p5v}
2\sum\limits_{n = 1}^\infty  {(-1)^{n-1}\tan ^{ - 1} \frac{{F_m }}{{F_{2n + m - 1} }}}  = \sum\limits_{n = 1}^m {(-1)^{n-1}\tan ^{ - 1} \frac{2}{{L_{2n - 1} }}}\,.
\end{equation}
\end{corollary}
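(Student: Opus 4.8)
The plan is to obtain each of the three displayed identities by letting $t\to\infty$ in the corresponding finite identity of Theorem~\ref{theorem.cf5bgud}: identity \eqref{eq.fmpz6qn} yields the first one (for $m$ odd), \eqref{eq.rnshzqs} the second (for $m$ even), and \eqref{eq.evkxy48} the third. An alternative, self-contained route via Lemma~\ref{lemma.af897g0} is indicated at the end.

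First I would record the elementary growth fact that, for each fixed $m\ge 0$ and each fixed $n$ in the relevant finite range, $F_{2n+2t+m-1}\to\infty$ and $L_{2n+2t+m-1}\to\infty$ as $t\to\infty$, because the Fibonacci and Lucas numbers grow like $\varphi^{k}$. Hence $\tan^{-1}(F_m/F_{2n+2t+m-1})\to 0$ and $\tan^{-1}(L_m/L_{2n+2t+m-1})\to 0$, and likewise every summand of $\sum_{n=t+1}^{t+m}\tan^{-1}(F_m/F_{2n+m-1})$ and of $\sum_{n=t+1}^{t+m}\tan^{-1}(L_m/L_{2n+m-1})$ tends to $0$; since each of these four sums has only $m$ terms, a number independent of $t$, each of them tends to $0$. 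Moreover the left-hand sides converge: the general term is $O(\varphi^{-2n})$ (using $\tan^{-1}x\le x$ for $x\ge 0$ together with the geometric growth of $F_k,L_k$), so the series in question, with or without the sign $(-1)^{n-1}$, are absolutely convergent and their partial sums tend to the stated infinite sums.

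Given these preliminaries, letting $t\to\infty$ in \eqref{eq.fmpz6qn} annihilates the terms $\sum_{n=1}^m\tan^{-1}(F_m/F_{2n+2t+m-1})$ and $\sum_{n=t+1}^{t+m}\tan^{-1}(L_m/L_{2n+m-1})$, leaving exactly the first identity; the same passage in \eqref{eq.rnshzqs} gives the second. For the alternating identity \eqref{eq.otv9p5v}, the only point requiring a word is the middle term $(-1)^{t-1}\sum_{n=1}^m(-1)^{n-1}\tan^{-1}(L_m/L_{2n+2t+m-1})$ of \eqref{eq.evkxy48}: although $(-1)^{t-1}$ has no limit, the absolute value of that term is at most $\sum_{n=1}^m\tan^{-1}(L_m/L_{2n+2t+m-1})\to 0$, so by the squeeze theorem the term tends to $0$; the same bound disposes of $\sum_{n=t+1}^{t+m}(-1)^{n-1}\tan^{-1}(F_m/F_{2n+m-1})$. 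What survives is precisely \eqref{eq.otv9p5v}.

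Alternatively, I would apply Lemma~\ref{lemma.af897g0} with exactly the substitutions used in the proof of Theorem~\ref{theorem.cf5bgud}. For the first identity, put $X_n=\tan^{-1}(L_m/L_{2n+m-1})$, which tends to $0$; then \eqref{eq.ulwifpc} reads $2\sum_{n\ge 1}X_n=\sum_{n=1}^mX_n+\sum_{n\ge 1}(X_n+X_{n+m})$, and by \eqref{eq.ryx4qng} the summand $X_n+X_{n+m}$ equals $\tan^{-1}(F_{2m}/F_{2n+2m-1})$, whose infinite sum is $\sum_{n=1}^m\tan^{-1}(F_m/F_{2n+m-1})$ by \eqref{eq.p48nihu}; combining the two resulting finite sums term-by-term via \eqref{eq.gxld0li} collapses the right-hand side to $\sum_{n=1}^m\tan^{-1}(2/L_{2n-1})$. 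The second identity uses $X_n=\tan^{-1}(F_m/F_{2n+m-1})$ together with \eqref{eq.ulwifpc}, \eqref{eq.imhso0b}, \eqref{eq.g5vsa8d} and \eqref{eq.gxld0li}; the third uses the same $X_n$ with \eqref{eq.yxh891p} when $m$ is odd and \eqref{eq.hrweot0} when $m$ is even, together with \eqref{eq.jf9ybp9} or \eqref{eq.imhso0b} respectively, then \eqref{eq.jpvgd6p} and \eqref{eq.gxld0li}. I do not expect a genuine obstacle here; the only care needed is the vanishing of the $t$-dependent correction terms, which is immediate for the two non-alternating identities and requires the one-line squeeze above for the alternating one.
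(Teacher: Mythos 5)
Your proposal is correct and follows exactly the route the paper indicates: the paper obtains this corollary by letting $t\to\infty$ in Theorem \ref{theorem.cf5bgud} (or, equivalently, by applying Lemma \ref{lemma.af897g0} with the same choices of $X_n$), which is precisely your argument. You simply supply the convergence and vanishing-term details, including the squeeze argument for the $(-1)^{t-1}$ factor, that the paper leaves implicit.
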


\hrule

\medskip

\noindent 2010 {\it Mathematics Subject Classification}:
Primary 11B39; Secondary 11B37.

\noindent \emph{Keywords: }
Fibonacci number, Lucas number, Lucas sequence, summation identity, partial sum

\hrule

\medskip

\noindent Concerned with sequences: 
A000032, A000045



\begin{thebibliography}{99}
\bibitem{adegoke} K. Adegoke, Generalizations of the reciprocal Fibonacci-Lucas sums of Brousseau,{\em Journal of Integer sequences} {\bf 21} (2018), 1--32. Article 18.1.6

\bibitem{adegoke15} K. Adegoke,  Infinite arctangent sums involving Fibonacci and Lucas numbers, \emph{Notes on Number Theory and Discrete Mathematics} {\bf 21}:1 (2015), 56--66.

\bibitem{frontczak15} R. Frontczak, On infinite series involving Fibonacci numbers, \emph{International Journal of Contemporary Mathematical Sciences} {\bf 10}:8 (2015), 363--379.

\bibitem{ruggles63} V. E. Hoggatt, Jr and I. D. Ruggles, A primer for the Fibonacci numbers--Part IV, \emph{The Fibonacci Quarterly} {\bf 1}:4 (1963), 39--45.

\bibitem{ruggles64} V. E. Hoggatt, Jr and I. D. Ruggles, A primer for the Fibonacci numbers--Part V, \emph{The Fibonacci Quarterly} {\bf 2}:1 (1964), 46--51.

\bibitem{melham99} R. Melham, Sums involving Fibonacci and Pell numbers, \emph{Portugaliae Mathematica} {\bf 56}:3 (1999), 309--317.

\bibitem{melham95} R. S. Melham and A. G. Shannon, Inverse trigonometric and hyperbolic summation formulas involving generalized Fibonacci numbers, \emph{The Fibonacci Quarterly} {\bf 33}:1 (1995), 32--40.


\end{thebibliography}
\end{document}